\newtheorem{lemma}{Lemma}[section]
\newtheorem{thm}[lemma]{Theorem}
\newtheorem{rem}[lemma]{Remark}
\newtheorem{prop}[lemma]{Proposition}
\newtheorem{cor}[lemma]{Corollary}
\newcommand\matN{{\mathbb{N}}}
\renewcommand{\hbar}{{\overline{h}}}
\newfont{\Got}{eufm10 scaled 1200}
\newcommand{\compo}{\,{\scriptstyle\circ}\,}
\newcommand{\mycap} [1] {\caption{\footnotesize{#1}}}
\newcommand\calX{{\mathcal X}}
\begin{document}

\title{Notes on the complexity of\\ $3$-valent graphs in $3$-manifolds}

\author{Ekaterina~\textsc{Pervova}\and Carlo~\textsc{Petronio}
\and Vito~\textsc{Sasso}}

\maketitle

\begin{abstract}
\noindent A theory of complexity for pairs $(M,G)$ with $M$ an
arbitrary closed $3$-manifold and $G\subset M$ a 3-valent graph was introduced
by the first two named authors, extending the original notion due to Matveev.
The complexity $c$ is known to be always additive under connected sum
away from the graphs, but not always under connected sum along (unknotted) arcs of the
graphs. In this article we prove the slightly surprising fact that
if in $M$ there is a sphere intersecting $G$ transversely at one point,
and this point belongs to an edge $e$ of $G$, then $e$ can be canceled from
$G$ without affecting the complexity. Using this fact we completely characterize
the circumstances under which complexity is additive under connected sum along graphs.
For the set of pairs $(M,K)$ with $K\subset M$ a knot, we also prove
that any function that is fully additive under connected sum along knots
is actually a function of the ambient manifold only.\\
\noindent MSC (2010):  57M27 (primary);  57M25, 57M20, 57M15 (secondary).
\end{abstract}

The definition of complexity for a (compact) $3$-manifold was originally
given by Matveev in~\cite{MaCompl} and proved extremely fruitful
over the time, leading to an excellent topological and geometric understanding~\cite{Brunosurv}
of a large number of the ``least complicated'' such manifolds. The main technical
idea of Matveev was that to use simple spines (defined below)
rather than triangulations, which allowed him to show in
particular that complexity is additive under connected sum, and
finite-to-one on irreducible manifolds.

The first extension of the notion of complexity for pairs $(M,G)$ where $M$
is a closed $3$-manifolds and $G\subset M$ is a $3$-valent graph
(possibly with knot components) was suggested in~\cite{PeOrbCompl}
and then thoroughly investigated in~\cite{PePeJKTR}.
(In the latter paper the main emphasis and most statements were given for $G$'s
without vertices, \emph{i.e.}, for links, but all the results and constructions
extend to the graph context ---see below.)

It was shown already in~\cite{PeOrbCompl}
that the complexity function $c$ is additive under connected sum away from
the graphs, and then it was proved in~\cite{PePeJKTR} that additivity under connected
sum along the graphs holds under some restriction (explained below), that cannot be avoided.
In this paper:
\begin{itemize}
\item[(1)] We show that if an edge $e$ of a graph $G\subset M$
intersects transversely at a point a sphere that meets $G$ at that point only,
then $e$ can be canceled without affecting the complexity of $(M,G)$;
\item[(2)] Using this fact we provide a very explicit characterization
of the circumstances under which complexity is additive under connected sum along graphs;
\item[(3)] Along the same lines of reasoning we show that if a function $\psi$ on the set of pairs
$(M,K)$, with $M$ a $3$-manifold and $K\subset M$ a knot, is fully additive under connected sum along knots, then
$\psi$ is actually insensitive to knots, namely $\psi(M,K)$ depends on $M$ only.
\end{itemize}
Result (1) was first established in~\cite{VitoTesi} in the context
of an extended theory of complexity for \emph{unitrivalent} graphs in $3$-manifolds.
The extension itself proved only mildly interesting, because one easily sees
that for a unitrivalent $G$ one has $c(M,G)=c(M,G')$ with $G'$ obtained from $G$
by collapsing as long as possible and removing isolated points. But it was only within this
extended context that the ``edge-cancelation theorem'' was first guessed. With the statement
at hand, we later worked out a proof fully contained in the original trivalent context, and
we draw result (2) as a conclusion. Result (3) also follows from the same machinery,
and is actually deduced from a more abstract theorem of categorical flavour concerning
the Grothendieck group associated to the semigroup of knot-pairs.

\paragraph{Acknowledgements} The authors would like to
thank Laurent Bartholdi for the reference to the paper~\cite{Wall}.

\section{Definition of complexity\\ and review of known facts}

In this section we will rapidly recall the definitions and facts from~\cite{PeOrbCompl}
and~\cite{PePeJKTR} that we will need below. For the sake of simplicity we will only consider
\textit{orientable} (but unoriented) $3$-manifolds, even if our results hold for non-orientable
manifolds as well.

If $M$ is a closed $3$-manifold and $G\subset M$ is a trivalent graph, we call $X=(M,G)$ a \textbf{graph-pair},
and we set $M(X):=M$ and $G(X):=G$. We allow $G$ to be disconnected and to have
knot components, and we regard as vertices of $G$ only the trivalent ones
(so a knot component of $G$ does not contain vertices). We say that $X$ is a \textbf{link-pair}
if $G(X)$ is a link, \emph{i.e.}, if it does not have vertices.

For $i\in\{0,1,2,3\}$ we call \textbf{\textit{i}-sphere} in a graph-pair $X$ a sphere $S\subset  M(X)$
intersecting $G(X)$ transversely at $i$ points. And for $i\in\{0,2,3\}$ we call
\textbf{trivial \textit{i}-ball} in $X$ a ball $B\subset M(X)$ such that
$B\cap G(X)$ is empty for $i=0$, an unknotted arc for $i=2$, and an unknotted Y-graph for $i=3$.
We define a \textbf{(simple) spine}
of $X$ to be a compact polyhedron $P\subset M(X)$ such that the link of each
point of $P$ embeds in the complete graph with $4$ vertices, $P$ intersects $G(X)$ transversely,
and the complement of $P$ in $X$ is a union of trivial $i$-balls (with varying $i$). A \textbf{vertex} of $P$
is a point whose link in $P$ is the whole complete graph with $4$ vertices, and the
\textbf{complexity} $c(X)$ of $X$ is the minimal possible number of vertices for a spine of $X$.
For closed manifolds, namely for pairs $X$ with $G(X)=\emptyset$, this definition agrees
with that given by Matveev~\cite{MaCompl}, who proved the following main facts:
\begin{itemize}
\item For all $n\in\matN$ there exist finitely many
irreducible manifolds $M$ (namely such that every sphere in $M$ bounds a ball in $M$)
with $c(M)=n$; moreover if
$n>0$ then $c(M)$ is realized by spines dual to triangulations of $M$;
\item If $M_1\# M_2$ is a connected sum of $M_1$ and $M_2$ (namely
$M_1\# M_2=(M_1\setminus B_1)\cup_f(M_2\setminus B_2)$, with $B_j\subset M_j$ a ball
and $f:\partial B_1\to\partial B_2$ a homeomorphism) then
$c(M_1\# M_2)=c(M_1)+c(M_2)$.
\end{itemize}
We will not spell out here the notion of duality, but we note that these two results,
combined with the fact
that every $3$-manifold can be uniquely realized as a connected sum of irreducible ones and copies of $S^2\times S^1$,
imply that the topological meaning of the notion of manifold complexity is
very well understood at a theoretical level. In addition, very satisfactory
experimental results have been established~\cite{Brunosurv}.
Turning to graph-pairs, the following was shown in~\cite{PeOrbCompl}:
\begin{itemize}
\item For all $n\in\matN$ there exist finitely many
$(0,1,2)$-irreducible graph-pairs $X$ (namely such every $i$-sphere in $X$ bounds a trivial $i$-ball in $X$
---in particular, no $1$-sphere exists), with $c(X)=n$; moreover if
$n>0$ then $c(X)$ is realized by spines dual to triangulations of $M(X)$
that contain $G(X)$ in their $1$-skeleton.
\end{itemize}
This fact naturally prompts for an analogue of the notion of connected sum for graph-pairs,
for the quest of a unique decomposition theorem for such pairs, and for the investigation
of additivity of complexity under connected sum. This notion itself is easily introduced:
for $i\in\{0,2\}$ one says that a graph-pair $X_1\#_i X_2$ is an \textbf{\textit{i}-connected
sum} of pairs $X_1$ and $X_2$ if it is obtained by removing trivial $i$-balls from $X_1$ and
$X_2$ and gluing the resulting boundary $i$-spheres, matching the intersections
with $G(X_1)$ and $G(X_2)$ for $i=2$. Uniqueness of the expression of a graph-pair
as a connected sum of $(0,2)$-irreducible ones however does not quite hold in a literal fashion,
but weaker versions do, see~\cite{MaRoots,PeOrbDeco} and the discussion in~\cite{PePeJKTR}.
Turning to our main concern, namely
additivity under $i$-connected sum,
it was shown in~\cite{PePeJKTR} using~\cite{MaRoots} that it holds for $i=0$ and under some restrictions, but not
in general, for $i=2$:

\begin{thm}
\begin{itemize}
Let $X_1$ and $X_2$ be graph-pairs. Then:
\item $c(X_1\#_0 X_2)=c(X_1)+c(X_2)$;
\item If $X_1\#_2 X_2$ is obtained from $X_1$ and $X_2$ by removing trivial $2$-balls that
intersect edges (or knot components) $e_1$ of $X_1$ and $e_2$ of $X_2$, and if
$e_j$ does not intersect any $1$-sphere in $X_j$ for $j=1,2$, then $c(X_1\#_2 X_2)=c(X_1)+c(X_2)$;
\item There exist cases where $c(X_1\#_2 X_2)<c(X_1)+c(X_2)$.
\end{itemize}
\end{thm}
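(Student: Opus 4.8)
The plan is to prove the three-part theorem by leveraging the duality between minimal spines and triangulations, together with the known connected-sum machinery from Matveev's work on manifold roots. I will treat the three bullets separately, since they have quite different difficulty.

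\medskip

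\noindent\textbf{The first bullet} ($0$-additivity). I would first establish the inequality $c(X_1\#_0 X_2)\le c(X_1)+c(X_2)$ by a direct construction: take minimal spines $P_1$ of $X_1$ and $P_2$ of $X_2$ realizing the complexities, remove the trivial $0$-balls that define the connected sum (arranging them to sit in the complementary balls of the spines), and glue. The resulting polyhedron is a spine of $X_1\#_0 X_2$ whose vertex count is the sum, giving ``$\le$''. The reverse inequality is the substantive direction. Here I would invoke the theory of \emph{roots} in the sense of~\cite{MaRoots}: a $0$-sphere along which the connected sum is taken gives a sphere in the pair meeting $G$ at no points, and one shows that a minimal spine can be cut along such a sphere into spines of the two summands without increasing the total number of vertices. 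This is exactly the graph-pair analogue of Matveev's additivity-under-connected-sum argument for closed manifolds, and the reference~\cite{PeOrbCompl} is quoted as already having carried it out.

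\medskip

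\noindent\textbf{The second bullet} ($2$-additivity under the no-$1$-sphere restriction). Again ``$\le$'' is the easy construction-and-glue direction, identical in spirit to the above but matching the two arcs $e_1$ and $e_2$ across the glued $2$-sphere. For ``$\ge$'' the hypothesis that $e_j$ meets no $1$-sphere in $X_j$ is precisely what is needed: the obstruction to splitting a minimal spine along the gluing $2$-sphere is the possible presence of a $1$-sphere that would create an uncancelable configuration. Under the stated hypothesis no such $1$-sphere interferes with $e_j$, so one can run the root-decomposition argument of~\cite{MaRoots} in the $(0,2)$-irreducible setting to split the minimal spine and conclude additivity. The main obstacle in this bullet is organizing the normalization of the spine relative to the gluing sphere so that the cutting is clean; this is where I expect the careful case analysis to live.

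\medskip

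\noindent\textbf{The third bullet} (failure of $2$-additivity in general). This is an existence statement, so it suffices to exhibit a single explicit counterexample, and here the edge-cancelation theorem advertised as result~(1) in the introduction is the natural engine. The idea: build a pair $X$ in which some edge $e$ is crossed once transversely by a $2$-sphere meeting $G$ only at that point (i.e.\ $e$ meets a $1$-sphere), realize $X$ as a $2$-connected sum $X_1\#_2 X_2$ along that very edge, and then apply edge-cancelation to show $c(X)=c(X')$ for the pair $X'$ with $e$ deleted. Because deleting $e$ strictly lowers the naive sum $c(X_1)+c(X_2)$ while leaving $c(X_1\#_2 X_2)$ unchanged, one reads off $c(X_1\#_2 X_2)<c(X_1)+c(X_2)$. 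The cleanest concrete choice is to take one summand to be a trivial pair (an unknotted arc in $S^3$) glued to a nontrivial one along the arc, so that the $1$-sphere hypothesis of the second bullet is visibly violated and the drop in complexity is forced. I would verify the strict inequality on this single example by computing both sides directly. The hard part here is choosing the example so that the edge-cancelation theorem applies cleanly and the two complexities are genuinely computable rather than merely bounded.
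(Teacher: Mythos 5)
This theorem is not proved afresh in the paper: the first two bullets are quoted from the earlier works (additivity under $\#_0$, and conditional additivity under $\#_2$ via Matveev's root theory), and the paper's accompanying remark only explains why the link-pair proof of Theorem~4.8 of the cited paper extends to graph-pairs — the no-$1$-sphere hypothesis on $e_j$ being exactly what makes the gluing $2$-sphere essential (non-trivial and unsplittable). Your outline for those two bullets (easy subadditivity by gluing minimal spines, hard direction via normal surfaces and $(0,2)$-root theory) matches that structure, so there is nothing to object to there beyond its unavoidable reliance on the cited machinery.

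The genuine gap is in the third bullet. Your proposed counterexample — the trivial pair (an unknotted arc in $S^3$) glued along the arc to a nontrivial knot pair — cannot work, for two independent reasons. First, the $1$-sphere hypothesis is \emph{not} violated by any knot pair in $S^3$: since $H_2(S^3)=0$, every embedded sphere in $S^3$ separates, hence meets a closed loop in an even number of transverse points, so $(S^3,K)$ contains no $1$-sphere for \emph{any} knot $K$; consequently, by the second bullet, complexity \emph{is} additive for every $\#_2$ of knot pairs in $S^3$. Second, $\#_2$ with the trivial pair is the identity operation, so additivity for your sum is immediate in any case: $c\left(X\#_2(S^3,U)\right)=c(X)=c(X)+0$. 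There is also a logical slip in your accounting: ``deleting $e$ strictly lowers the naive sum $c(X_1)+c(X_2)$'' is not meaningful, since that sum is a fixed number; what must be shown is that $c\left((X_1\#_2 X_2)^{(e)}\right)<c(X_1)+c(X_2)$ for the chosen pairs. To get a genuine counterexample you need a summand whose edge crosses a \emph{non-separating} sphere once, which forces the ambient manifold to leave $S^3$: the pair used in the literature, and echoed in the paper's remarks, is $D=(S^2\times S^1,\{*\}\times S^1)$, glued along its knot component to $(S^3,K)$ with $c(S^3,K)>0$. Then the glued-up knot in $S^3\#_0(S^2\times S^1)=S^2\times S^1$ meets the surviving non-separating sphere $S^2\times\{*\}$ at one point, so edge cancelation (or, historically, the explicit homeomorphism $(S^3,K)\#_2 D\cong D$) gives $c\left((S^3,K)\#_2 D\right)=c\left(S^2\times S^1,\emptyset\right)=0$, while $c(S^3,K)+c(D)>0$. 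With this repair your strategy for the third bullet does work, and is in fact how the paper itself rederives the known example.
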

For the sake of brevity, we will describe the situation of the second item above
by saying that $X_1\#_2 X_2$ is a $2$-connected sum of $X_1$ and $X_2$ \textbf{along} $e_1$ and $e_2$.

\begin{rem}
\emph{The statement of Theorem~4.8 in~\cite{PePeJKTR} is given for link-pairs $X_1,X_2$
and asserts that $c(X_1\#_2 X_2)=c(X_1)+c(X_2)$ when neither $X_1$ nor $X_2$ contains $1$-spheres.
However one can easily check that:
\begin{itemize}
\item The absence of trivalent vertices in $G(X_j)$, namely the fact that $G(X_j)$ is a link rather than
a graph, is never used in the proof or in the results on which the proof is based;
\item For the additivity of $c$ under a $2$-connected sum $X_1\#_2 X_2$ performed
along edges $e_1$ of $G(X_1)$ and $e_2$ of $G(X_2)$, the fact that $e_j$ does not meet
$1$-spheres in $X_j$ is sufficient: in the proof one needs to show that the
$2$-sphere $S$ giving the $\#_2$ is essential, namely non-trivial and unsplittable; the former condition
is obvious, and for the latter one only needs the fact that $e_j$ does not intersect $1$-spheres in $X_j$.
\end{itemize}}
\end{rem}

The main aim of this paper is to describe exactly in which cases one has $c(X_1\#_2 X_2)=c(X_1)+c(X_2)$.
This description will be derived from the fact that edges intersecting $1$-spheres are immaterial to complexity.
Using $1$-spheres we will then also prove that for knot-pairs any invariant behaving in a fully additive
way under $2$-connected sum is actually insensitive to knots.

\section{Edges intersecting $1$-spheres\\ do not contribute to the complexity}

If $X$ is a graph-pair and $e$ is an edge or a knot component of $G=G(X)$, we denote by $X^{(e)}$ the pair
$\left(M(X),G^{(e)}\right)$ where $G^{(e)}$ is obtained from $G$ by \textit{canceling} $e$, namely:
\begin{itemize}
\item[(A)] If $e$ is an edge of $G$ ending at distinct trivalent vertices of $G$, then $G^{(e)}$ is $G$
minus the interior of $e$ (an open arc);
\item[(B)] If $e$ is a knot component of $G$, then $G^{(e)}$ is $G$ minus $e$ (a circle);
\item[(C)] If $e$ ends on both sides at one vertex $V$ of $G$, and $e'$ is the other edge
of $G$ incident to $V$, then $G^{(e)}$ is $G$ minus $V$ and the interiors of $e$ and $e'$.
\end{itemize}

\begin{thm}\label{edge:cancelation:thm}
Let $X$ be a graph-pair, suppose that $X$ contains a $1$-sphere $S$ and let
$S\cap G(X)$ be a point of an edge or knot component $e$ of $G(X)$.
Then $c(X)=c\left(X^{(e)}\right)$.
\end{thm}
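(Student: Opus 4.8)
The plan is to prove the two inequalities $c(X)\le c(X^{(e)})$ and $c(X^{(e)})\le c(X)$ separately, each by a direct construction of a spine of the target pair from a minimal spine of the source pair, without creating or destroying vertices of the polyhedron. The key geometric object throughout is the given $1$-sphere $S$, which meets $G$ transversely at a single point $p$ lying on $e$. I would first record the standard fact (implicit in the spine theory of \cite{PeOrbCompl,PePeJKTR}) that one may choose a minimal spine $P$ in general position with respect to $S$ and with respect to the edge $e$; in particular one may arrange that $P\cap S$ is a $1$-complex and that the complementary trivial $i$-balls behave controllably near $S$.

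For the inequality $c(X^{(e)})\le c(X)$, I would start from a minimal spine $P$ of $X$ and produce a spine of $X^{(e)}$ with no more vertices. The idea is that $X^{(e)}$ differs from $X$ only in that the arc (or circle) $e$ has been removed, which means that the trivial $i$-balls of the complement that abutted $e$ must be modified: an edge of $G$ running through a trivial $2$-ball or $3$-ball becomes, after cancellation, a trivial ball of lower ``$i$''. The $1$-sphere $S$ is exactly what guarantees that the relevant piece of $e$ can be pushed/absorbed without obstruction: cutting $M$ along $S$ and using that $e$ crosses $S$ once lets me isolate the portion of $e$ to be deleted inside a ball meeting the spine in a controlled way, so that deleting $e$ (and performing the collapse dictated by case (A), (B), or (C)) turns $P$ into an admissible spine of $X^{(e)}$ after at most some elementary collapses that never increase the vertex count. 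The reverse inequality $c(X)\le c(X^{(e)})$ proceeds dually: starting from a minimal spine $Q$ of $X^{(e)}$, I would reinsert the edge $e$ along with the point $p\in S$, threading it through the appropriate complementary ball, and check that the resulting polyhedron is still a legitimate simple spine of $X$ with the same vertices. Here the $1$-sphere is essential because it supplies an unknotted, unobstructed arc along which to reintroduce $e$, guaranteeing that the reconstructed complementary pieces are genuinely trivial $i$-balls.

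The main obstacle, and the step I expect to require the most care, is verifying in the $c(X^{(e)})\le c(X)$ direction that after deleting $e$ the complement of the modified polyhedron really decomposes into \emph{trivial} $i$-balls, rather than balls meeting the remaining graph in a knotted or otherwise inadmissible fashion. This is precisely where the $1$-sphere hypothesis must be used quantitatively: because $S$ meets $e$ in a single transverse point and meets the rest of $G$ nowhere, the portion of $e$ being removed lies in a region split off by $S$ in which the arc is forced to be unknotted relative to the spine, so no knotting can be introduced by the cancellation. I would handle the three cancellation cases (A), (B), (C) of the definition of $G^{(e)}$ in parallel, noting that (C) requires the additional bookkeeping of simultaneously collapsing the trivalent vertex $V$ and the second incident edge $e'$; the presence of $S$ crossing $e$ once is what licenses the local model in which $V$ together with $e,e'$ sits inside a trivial $3$-ball adjacent to $S$, so that the combined collapse is again a permissible spine move. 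Once both inequalities are in hand the equality $c(X)=c(X^{(e)})$ follows at once.
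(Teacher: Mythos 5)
There is a genuine gap, and it sits exactly where your proposal is briefest. First, a sign that the difficulty has been located on the wrong side: the inequality $c\left(X^{(e)}\right)\leq c(X)$ is immediate and uses neither $S$ nor any construction --- any spine of $X$ is already a spine of $X^{(e)}$, since cancelling $e$ only turns the complementary trivial $i$-balls meeting $e$ into trivial balls of lower index. All the work lies in the opposite inequality $c(X)\leq c\left(X^{(e)}\right)$, which you propose to prove by taking a minimal spine $Q$ of $X^{(e)}$, putting it in general position with respect to $S$, and ``reinserting'' $e$ through a complementary ball. This step is not justified and, as stated, does not work. General position only gives transversality: $S\cap Q$ can be an arbitrarily complicated family of curves, and the complementary balls of $Q$ have no controlled relation either to $S$ or to the isotopy class of the deleted arc $e$ relative to $G^{(e)}$. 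Threading an arc through ``the appropriate complementary ball'' produces a spine of \emph{some} pair $\left(M,G^{(e)}\cup e'\right)$, but you give no argument that $e'$ is isotopic to $e$ rel $G^{(e)}$, i.e.\ that the pair you have built is $X$ rather than a different pair with the same cancelled graph. (Already in the basic example $X=(S^2\times S^1,\{*\}\times S^1)$, where $X^{(e)}=(S^2\times S^1,\emptyset)$, the reinserted circle must cross the spine and the spine itself must be enlarged; simply placing the knot in a complementary ball is impossible, since a trivial ball cannot meet the graph in a circle.)

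The paper circumvents precisely this loss of control by never working with a minimal spine of $X^{(e)}$ in place: it cuts $X^{(e)}$ along $S$ and caps, writing $X^{(e)}$ as a $0$-connected sum ($X_1\#_0 X_2$ when $S$ separates, $Y\#_0(S^2\times S^1,\emptyset)$ when it does not), and then invokes the \emph{full additivity} of $c$ under $\#_0$ proved in~\cite{PePeJKTR} via Matveev's root theory~\cite{MaRoots}, obtaining $c\left(X^{(e)}\right)=c(X_1)+c(X_2)$ (resp.\ $=c(Y)$). Only in the cut-open pieces can the capping balls and the stubs of $e$ be freely isotoped next to the minimal spines --- an arc with a free end is always ``short and straight,'' which is the content of Remark~\ref{subtle:rem} --- and there the paper performs an explicit reconstruction (adding the sphere $\partial B_1=\partial B_2$, a bubble, and connecting arcs, with separate figures for cases (A), (B), (C)) of a spine of $X$ with $c(X_1)+c(X_2)$ vertices. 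The paper even remarks that subadditivity of $c$ under $\#_0$ would not suffice: the hard, root-theoretic half of additivity is what makes the argument close. Your proposal contains no substitute for this ingredient, and without it the reinsertion step cannot be repaired by local spine moves alone.
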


\begin{proof}
Of course any spine of $X$ is also a spine of $X^{(e)}$, whence
$c\left(X^{(e)}\right)\leq c(X)$, regardless of the existence of $S$.

\medskip

Turning to the opposite inequality, suppose first that $S$ is a separating
sphere in $M$, so in particular cancelation of $e$ takes place as in case (A) above.
Cutting $X^{(e)}$ along $S$ and capping with two $0$-balls $B_1$ and $B_2$ we see that
$X^{(e)}$ is realized as some $X_1\#_0 X_2$,
where $S$ is the image in $X^{(e)}$ of the two spheres glued to perform
the $0$-connected sum, and $G^{(e)}$ is the disjoint
union of $G(X_1)$ and $G(X_2)$. Additivity of complexity under $0$-connected
sum~\cite{PePeJKTR} shows that $c\left(X^{(e)}\right)=c(X_1)+c(X_2)$.
We now claim that $c(X)\leqslant c(X_1)+c(X_2)$, which implies that
$c(X)\leqslant c\left(X^{(e)}\right)$ and hence the conclusion that
$c(X)=c\left(X^{(e)}\right)$.

To prove that $c(X)\leqslant c(X_1)+c(X_2)$ we start from spines $P_1$ and $P_2$
of $X_1$ and $X_2$ having $c(X_1)$ and $c(X_2)$ vertices,
and we construct from $P_1$ and $P_2$ a spine of $X$ without adding vertices, which
of course implies the conclusion. To do so, let us
denote by $V_1$ and $V_2$ the points of $G(X_1)$ and $G(X_2)$ at which $e$ had its ends
(before getting removed), and by $e_j$ the edge of $G(X_j)$ on which $V_j$ lies; recall that
$B_j$ is the $0$-ball in $X_j$ used to cap $X^{(e)}$ after cutting along $S$.
Then:
\begin{itemize}
\item We can assume that $e_j$ intersects $P_j$ at a surface point of $P_j$,
and that $V_j$ lies ``near'' this intersection point, as in Fig.~\ref{cancedge12:fig}-left;
    \begin{figure}
    \begin{center}
    \includegraphics[scale=0.8]{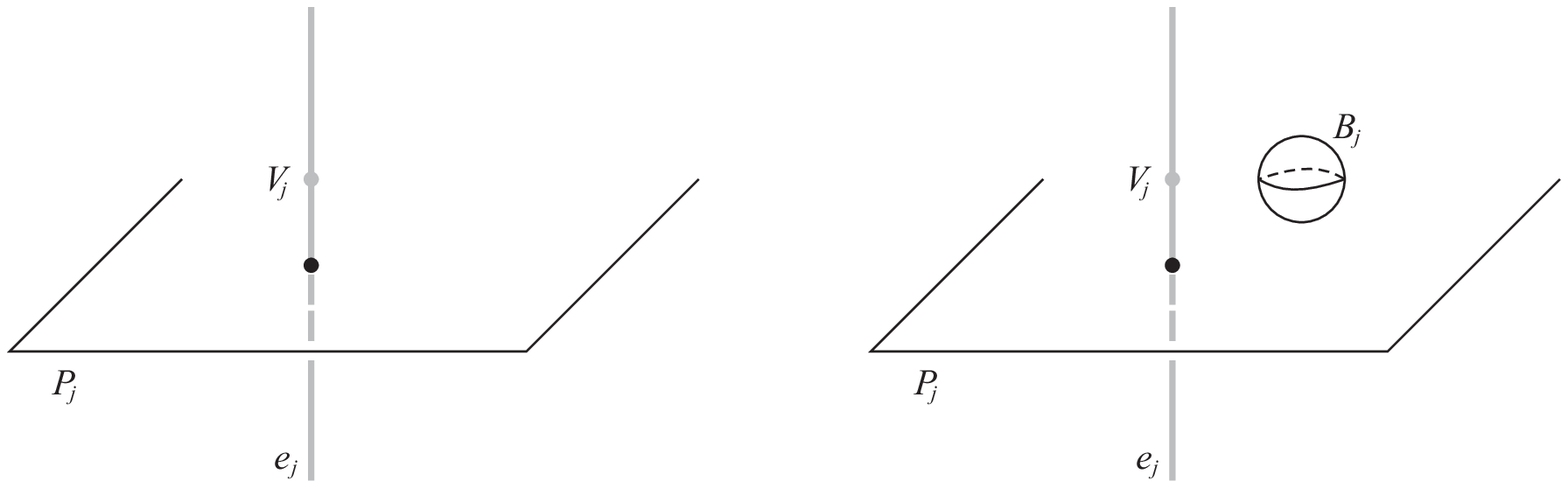}
    \mycap{A spine of one of the two pairs of which $X^{(e)}$ is the $0$-connected sum along $S$, and position of
    the ball along which the $0$-connected sum is performed}
    \label{cancedge12:fig}
    \end{center}
    \end{figure}
\item We can assume that $B_j$ lies near $V_j$ in the ball component of
$X_j\setminus P_j$ that contains $V_j$, as in Fig.~\ref{cancedge12:fig}-right;
\item Up to adding a bubble to $P_j$ we can assume
that the ball component of $X_j\setminus P_j$ containing $V_j$ and $B_j$ is a trivial $2$-ball, as in Fig.~\ref{cancedge34:fig}-left;
    \begin{figure}
    \begin{center}
    \includegraphics[scale=0.8]{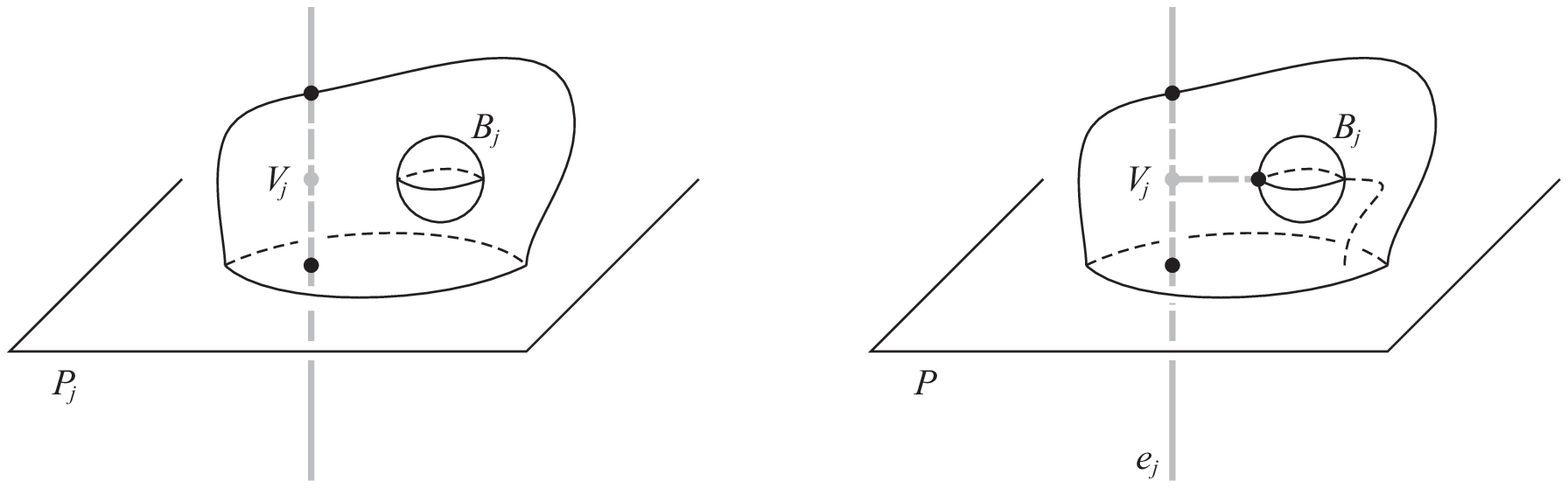}
    \mycap{Isolating the ball $B_j$ into  a $2$-ball of $X_j\setminus P_j$, and construction of a spine of $X$}
    \label{cancedge34:fig}
    \end{center}
    \end{figure}
\item Now we add to $G(X_j)$ an arc going from $V_j$ to a point $A_j$ of $\partial B_j$, and
we add to $P_j$ an arc going from $P_j$ to a different point $C_j$ of $\partial B_j$,
as in Fig.~\ref{cancedge34:fig}-right;
we then remove the interiors of $B_1$ and $B_2$, we glue $\partial B_1$ to $\partial B_2$
by a homeomorphism isotopic to that giving $X^{(e)}$ as $X_1\#_0 X_2$ and
mapping $A_1$ to $A_2$ and $C_1$ to $C_2$, and we call $P$ the union of $P_1$ and $P_2$, of
$\partial B_1=\partial B_2$, and of the two added arcs
glued along $C_1=C_2$.
\end{itemize}
This construction gives a spine $P$ of $X$ with $c(X_1)+c(X_2)$ vertices, and
the proof for the case where $S$ is separating is complete.

\medskip

We now have to deal with the case where $S$ is non-separating.
Suppose first that again the cancelation of $e$ occurs as in situation (A) above.
Then cutting $X^{(e)}$ along $S$ and capping with two balls $B_1$ and $B_2$ we get some
$Y$ such that $X^{(e)}=Y\#_0 Z$ with $Z=(S^2\times S^1,\emptyset)$.
Since $c(Z)=0$, additivity of complexity under $\#_0$ implies that
$c\left(X^{(e)}\right)=c(Y)$, therefore we get the desired conclusion as soon as we show
that $c(X)\leq c(Y)$. The construction of a spine of $X$ starting from a spine $P$
of $Y$ having $c(Y)$ vertices and without addition of vertices is actually
identical to that shown above, except that one may initially have in the first place
that $V_1$ and $V_2$ belong to the same arc of $G(Y)\setminus (P\cup V(G(Y)))$,
as in Fig.~\ref{nonsepA:fig}-left,
    \begin{figure}
    \begin{center}
    \includegraphics[scale=0.8]{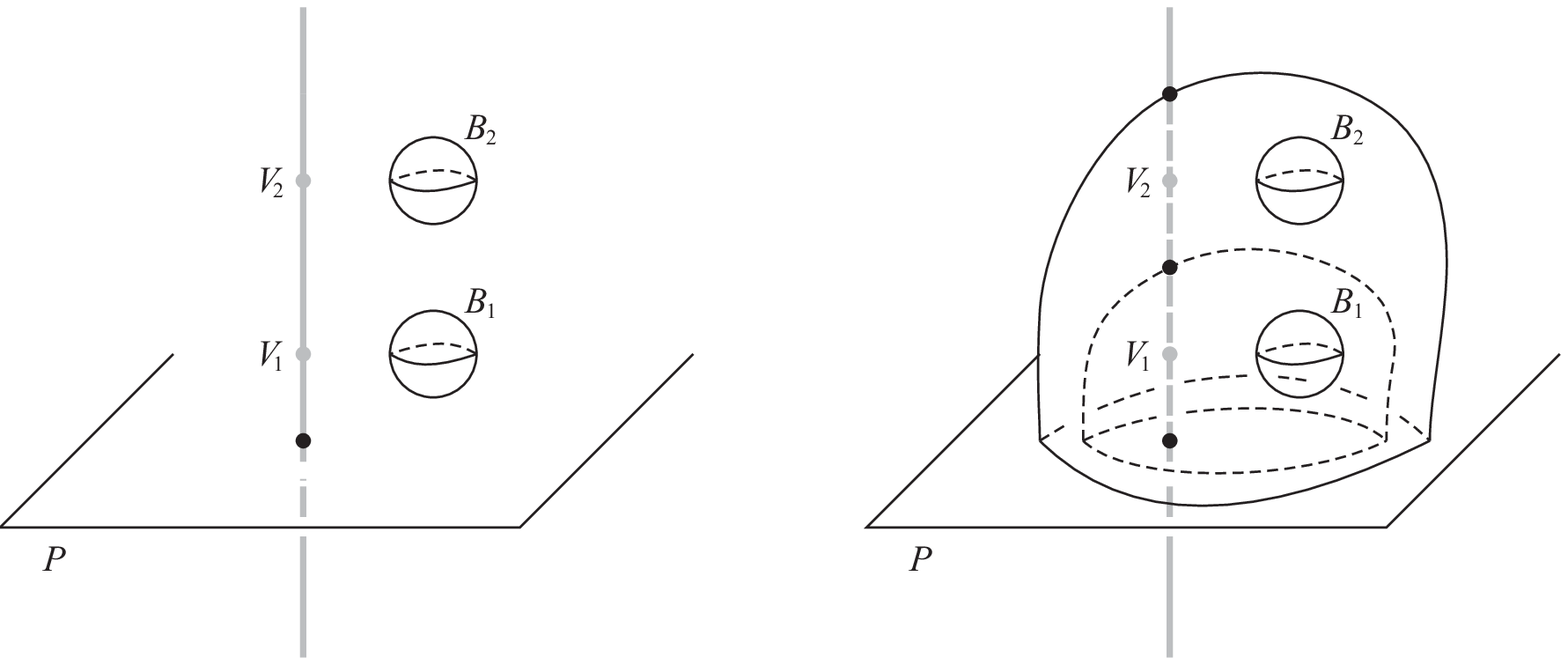}
    \mycap{After cutting along a non-separating $S$ in case (A), one can
    assume that the capping balls $B_1$ and $B_2$ are contained in different $1$-balls in $Y\setminus P$}
    \label{nonsepA:fig}
    \end{center}
    \end{figure}
in which case one performs the construction described in Fig.~\ref{nonsepA:fig}-right,
after which one can proceed precisely as before.

Turning to a non-separating $S$ in situation (B), we again cut $X^{(e)}$ along
$S$ and cap using balls $B_1$ and $B_2$, getting some $Y$ with
$X^{(e)}=Y\#_0 Z$ and $Z=(S^2\times S^1,\emptyset)$ as above.
To show that $c(X)\leq c(Y)$ and conclude we pick a point $V$ that
used to belong to $e$ before it got removed, and we assume that
$V,B_1,B_2$ lie near a spine $P$ of $Y$ with $c(Y)$ vertices,
as in Fig.~\ref{nonsepB:fig}-left.
    \begin{figure}
    \begin{center}
    \includegraphics[scale=0.8]{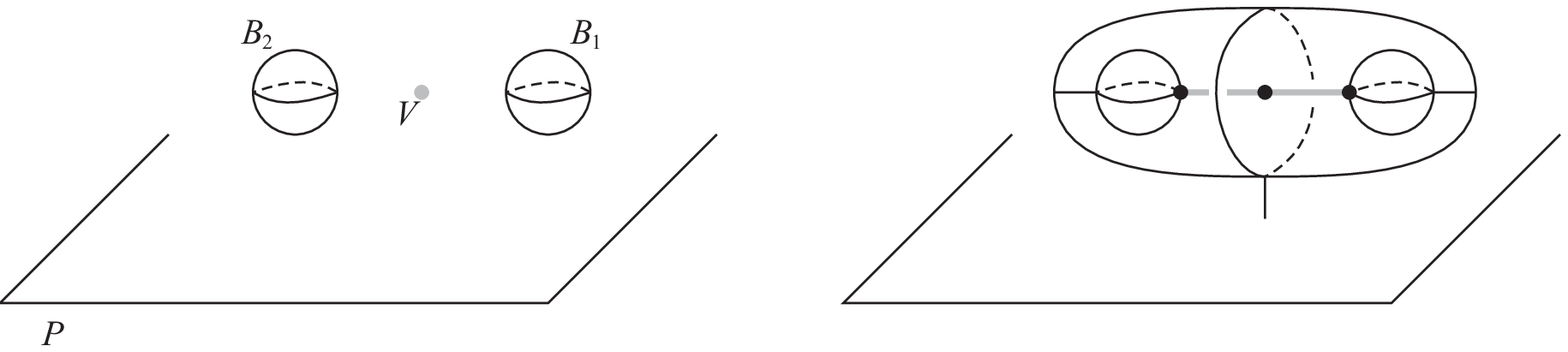}
    \mycap{Construction of a spine of $X$ from one of $Y$ in case (B)}
    \label{nonsepB:fig}
    \end{center}
    \end{figure}
We then construct a spine of $X$ as suggested in Fig.~\ref{nonsepB:fig}-right, with
the two visible spheres suitably glued together.

We are left to a non-separating $S$ as in (C). Once again $X^{(e)}=Y\#_0 Z$ and it is enough
to show that $c(X)\leqslant c(Y)$. We denote by $V$ the vertex of
$G(X)$ at which $e$ ends on both sides, and by $W$ the end different from $V$ of the edge $e'$
of $G(X)$ different from $e$ and incident to $V$. We can assume
as in Fig.~\ref{nonsepC:fig}-left
    \begin{figure}
    \begin{center}
    \includegraphics[scale=0.8]{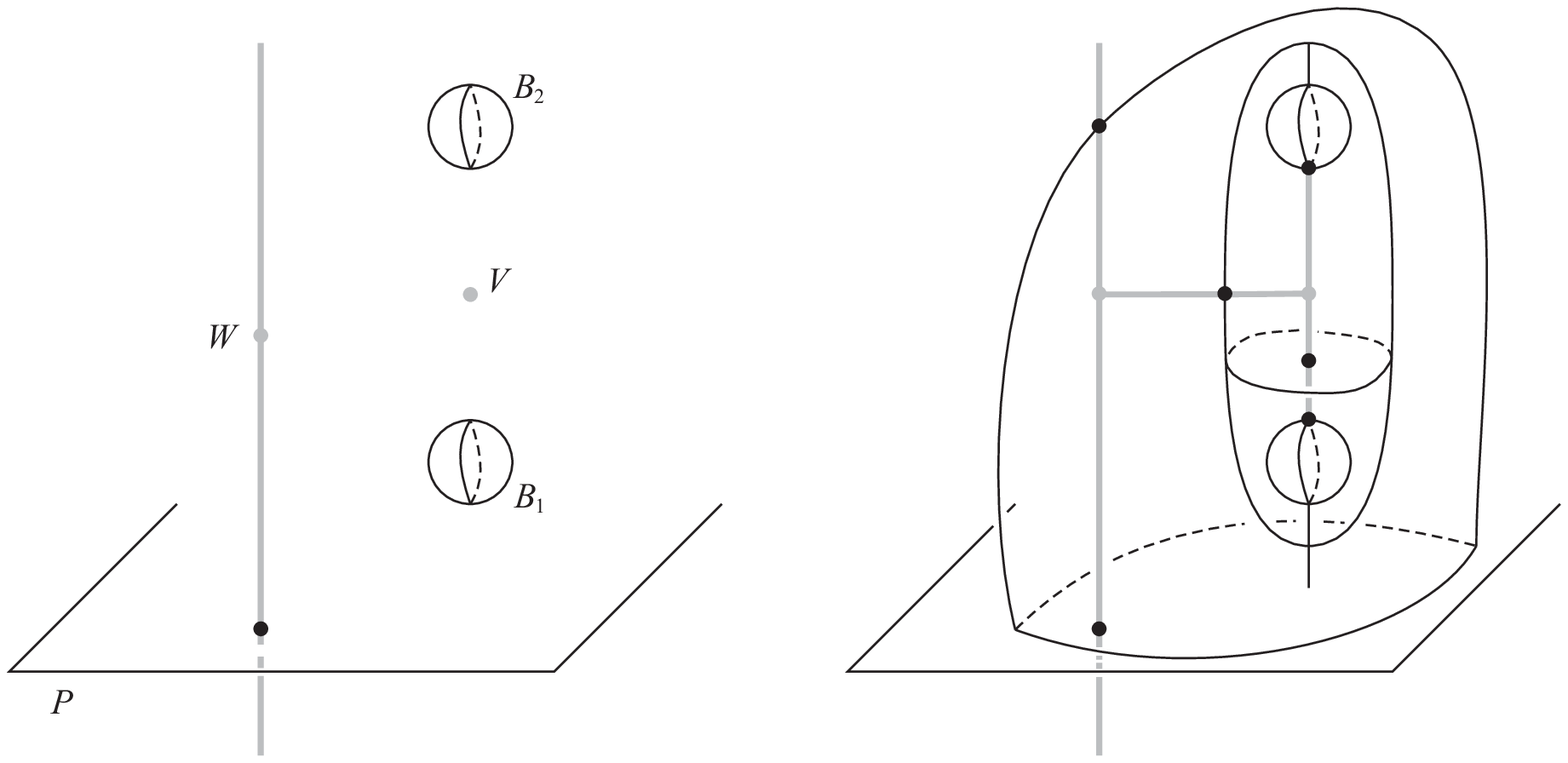}
    \mycap{Construction of a spine of $X$ from one of $Y$ in case (C)}
    \label{nonsepC:fig}
    \end{center}
    \end{figure}
that $V,B_1,B_2$ lie near $W$ in a component of $Y\setminus P$, where $P$ is a
spine of $Y$ having $c(Y)$ vertices.
Fig.~\ref{nonsepC:fig}-right now suggests how to construct a spine of $X$ without adding vertices, and
the proof is complete.
\end{proof}

\begin{rem}\label{subtle:rem}
\emph{A small subtlety in the above proof is perhaps worth pointing out.
One could imagine that, after cutting $X$ along $S$ and capping,
the two halves in which $e$ has been cut may physically appear as ``long and knotted'' arcs,
as in Fig.~\ref{subtlety:fig}-left,
    \begin{figure}
    \begin{center}
    \includegraphics[scale=0.8]{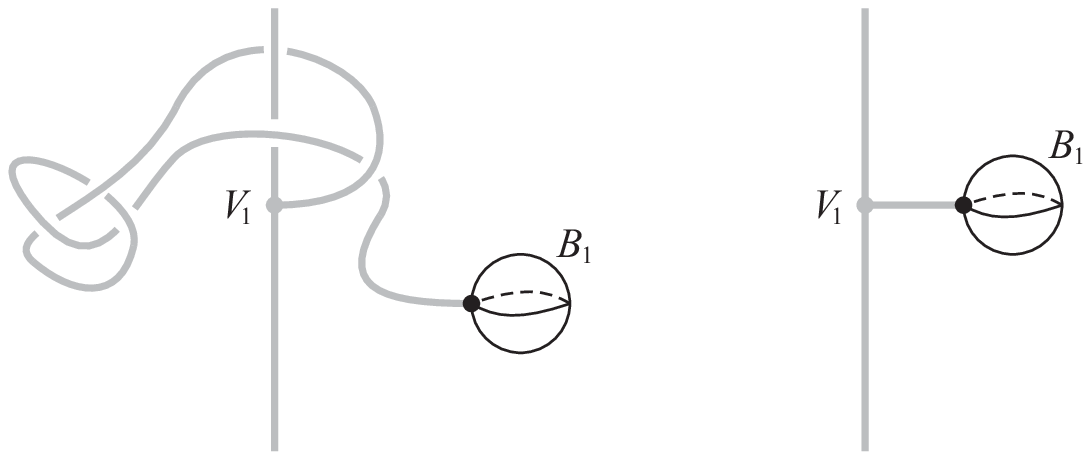}
    \mycap{An arc is always short and straight: the two configurations shown are the same}
    \label{subtlety:fig}
    \end{center}
    \end{figure}
but an arc with a free end is always ``short and straight,''
as in Fig.~\ref{subtlety:fig}-right, therefore our construction always applies. }
\end{rem}

\begin{rem}
\emph{The proof of additivity of $c$ under $0$-connected sum is carried out in~\cite{PePeJKTR}, just as the original
proof in~\cite{MaCompl}, in two steps. One first shows by hand that $c$ is subadditive,
namely that $c(X\#_0Y)\leqslant c(X)+c(Y)$ for all $X$ and $Y$, and then one shows the opposite
inequality using normal surfaces and a suitable decomposition theory: for manifolds~\cite{MaCompl},
the Haken-Kneser-Milnor decomposition of manifolds into prime ones, and, for graph-pairs~\cite{PePeJKTR},
the $(0,2)$-reduction of Matveev's root theory~\cite{MaRoots}. Analyzing the above proof one sees that
subadditivity of $c$ under $\#_0$ would not be sufficient to carry out the argument.
In other words, it is the hard part of additivity, depending on root theory,
that one needs in order to show that edges $e$ as in the statement can be canceled without affecting complexity.}
\end{rem}

\begin{rem}\label{weakly:cited:rem}
\emph{It was already shown in~\cite{PePeJKTR} that if $K$ is any knot in $S^3$ and
$D=(S^2\times S^1,\{*\}\times S^1)$ then $X=(S^3,K)\#_2 D$ has complexity $0$.
Our Theorem~\ref{edge:cancelation:thm} easily implies the result just stated,
because the knot $G(X)$ encounters some sphere $S^2\times\{*\}$ at one point only,
so the knot can be canceled from $X$ without affecting the complexity, but $X$
minus its knot is $(S^2\times S^1,\emptyset)$, that has complexity $0$.}
\end{rem}

\section{Two-connected sums under which\\
complexity is additive}

Exploiting the result from the previous section and the facts already established in~\cite{PePeJKTR}
we can now completely characterize the situations in which complexity is additive under $2$-connected sum:

\begin{thm}\label{2-add:characterization}
Let $X_1$ and $X_2$ be graph-pairs and consider a $2$-connected sum
$X_1\#_2 X_2$ performed along edges $e_1$ of $X_1$ and $e_2$ of $X_2$.
Then $c(X_1\#_2 X_2)=c(X_1)+c(X_2)$ if and only if one of the
following holds:
\begin{itemize}
\item[(1)] For $j=1,2$ the edge $e_j$ does not meet $1$-spheres in $X_j$;
\item[(2)] For $j=1,2$ one has $c\left(X_j^{(e_j)}\right)=c(X_j)$, namely $e_j$ can be canceled
from $X_j$ without affecting complexity.
\end{itemize}
\end{thm}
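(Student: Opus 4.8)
The plan is to deduce the whole equivalence from two ingredients: the additivity theorem recalled in Section~1 (whose second item yields $c(X_1\#_2 X_2)=c(X_1)+c(X_2)$ as soon as condition $(1)$ holds, since $(1)$ is exactly its hypothesis) and the edge-cancelation Theorem~\ref{edge:cancelation:thm}. The first thing I would record is a one-way link between the two conditions: by Theorem~\ref{edge:cancelation:thm}, if $e_j$ meets a $1$-sphere in $X_j$ then $c\bigl(X_j^{(e_j)}\bigr)=c(X_j)$; equivalently, if $e_j$ cannot be canceled without strictly lowering complexity, then $e_j$ avoids every $1$-sphere in $X_j$. This single implication is what couples $(1)$ and $(2)$ and organizes the case analysis.

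The heart of the argument is the following identity, which I would establish first: \emph{if $e_1$ meets a $1$-sphere $S_1$ in $X_1$ (the role of $e_2$ being symmetric), then}
\[
c(X_1\#_2 X_2)=c\bigl(X_1^{(e_1)}\bigr)+c\bigl(X_2^{(e_2)}\bigr).
\]
To prove it I would examine the gluing sphere $S$ realizing the $\#_2$, along which $e_1$ and $e_2$ are spliced into two (or, in the knot-component case, one) edges crossing $S$. Choosing the trivial $2$-ball removed from $X_1$ to be disjoint from $S_1$ --- legitimate by the ``short and straight'' principle of Remark~\ref{subtle:rem} --- the sphere $S_1$ survives in $X_1\#_2 X_2$ as a $1$-sphere meeting one spliced edge $f$, so Theorem~\ref{edge:cancelation:thm} cancels $f$ without changing complexity. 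The crucial point is what becomes of $S$ afterwards: having lost its intersection with $f$, the gluing sphere $S$ now meets the graph only along the remaining spliced edge $f'$, hence $S$ has \emph{itself} become a $1$-sphere, and a second application of Theorem~\ref{edge:cancelation:thm} cancels $f'$. Once both spliced edges are gone, $S$ is disjoint from the graph and the resulting pair is exactly $X_1^{(e_1)}\#_0 X_2^{(e_2)}$; additivity under $\#_0$ then gives the displayed identity. (When $e_1$ is a knot component the splice produces a single component, so one cancelation already frees $S$ from the graph and the same conclusion follows.)

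With the identity in hand the theorem assembles quickly. For sufficiency: condition $(1)$ gives additivity by the Section~1 result; and if $(2)$ holds while some $e_j$, say $e_1$, meets a $1$-sphere, then the identity together with $(2)$ gives $c(X_1\#_2 X_2)=c\bigl(X_1^{(e_1)}\bigr)+c\bigl(X_2^{(e_2)}\bigr)=c(X_1)+c(X_2)$, whereas if no $e_j$ meets a $1$-sphere then $(1)$ already holds. For necessity I argue by contraposition: if neither $(1)$ nor $(2)$ holds, then some $e_a$ meets a $1$-sphere and some $e_b$ satisfies $c\bigl(X_b^{(e_b)}\bigr)<c(X_b)$; by the opening observation $e_b$ avoids $1$-spheres, so $a\neq b$ and we may take $a=1$, $b=2$. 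Then the identity, combined with $c\bigl(X_1^{(e_1)}\bigr)=c(X_1)$ from Theorem~\ref{edge:cancelation:thm}, yields
\[
c(X_1\#_2 X_2)=c\bigl(X_1^{(e_1)}\bigr)+c\bigl(X_2^{(e_2)}\bigr)=c(X_1)+c\bigl(X_2^{(e_2)}\bigr)<c(X_1)+c(X_2),
\]
so additivity fails, exactly as required.

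The step I expect to be the main obstacle is the ``cascade'' inside the identity: recognizing and rigorously justifying that canceling the first spliced edge --- an operation licensed purely by the $1$-sphere \emph{imported} from $X_1$ --- degrades the $2$-sphere $S$ into a genuine $1$-sphere that then licenses canceling the second spliced edge. Making this precise requires care with the several cancelation patterns (A)--(C) of Section~2 and with the knot-component case, and with checking that at each stage $S$ remains an embedded sphere meeting the graph transversely in exactly the expected points. The consistency check against Remark~\ref{weakly:cited:rem}, where $e_2$ meets some $S^2\times\{*\}$ and the identity returns complexity $c(S^3,\emptyset)+c(S^2\times S^1,\emptyset)=0$, is a useful sanity test that the construction behaves as intended.
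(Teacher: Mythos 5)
Your proposal is correct and follows essentially the same route as the paper: both rest on the key identity $c(X_1\#_2 X_2)=c\bigl(X_1^{(e_1)}\bigr)+c\bigl(X_2^{(e_2)}\bigr)$ when some $e_j$ meets a $1$-sphere, obtained by cancelling the two spliced edges via Theorem~\ref{edge:cancelation:thm} (with the knot-component case needing only one cancelation) and then invoking additivity under $\#_0$, and your contrapositive assembly of the two directions is logically equivalent to the paper's direct derivation. The one mechanical difference is how the second spliced edge gets cancelled: the paper pushes a parallel copy of the imported $1$-sphere across the gluing ball $B_1$ so that it meets the other spliced edge, whereas you observe that after the first cancelation the gluing $2$-sphere has itself degraded into a $1$-sphere --- both devices are valid, and yours is marginally more economical.
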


\begin{proof}
Suppose that $c(X_1\#_2 X_2)=c(X_1)+c(X_2)$ and that (1) does not
happen. Then up to permutation we have that $X_1$ meets a $1$-sphere
$S$ at a point of $e_1$, therefore
$c(X_1)=c\left((X_1)^{(e_1)}\right)$ by
Theorem~\ref{edge:cancelation:thm}. We can of course assume that $S$
does not meet the ball $B_1\subset X_1$ along which the
$2$-connected sum is performed.

Suppose first that $e_1$ or $e_2$
is a knot component of $G(X_1)$ or $G(X_2)$. Then the traces in $X_1\#_2 X_2$ of
$e_1$ and $e_2$ give a single edge (or knot component) $e$ in
$X_1\#_2 X_2$, and $S$ is a $1$-sphere in $X_1\#_2 X_2$ meeting $e$ at one point,
therefore Theorem~\ref{edge:cancelation:thm}
implies that $c(X_1\#_2 X_2)=c(X')$, where $X'=(X_1\#_2 X_2)^{(e)}$.
Suppose next that both $e_1$ and $e_2$ are edges ending at vertices of
$G(X_1)$ and $G(X_2)$, and note that we can take
a parallel copy $S'$ of $S$ isotoped across $B_1$, that again meets $G(X_1)$ at a point
of $e_1$, but on the opposite side with respect to $B_1$ to the
intersection point between $S$ and $e_1$. The traces in $X_1\#_2 X_2$ of $e_1$ and $e_2$
now give rise in $G(X_1\#_2 X_2)$ to two edges $e$ and $e'$. By construction
$S$ and $S'$ are $1$-spheres in $X_1\#_2 X_2$ meeting $G(X_1\#_2 X_2)$ at points
of $e$ and $e'$, therefore Theorem~\ref{edge:cancelation:thm}
implies that $c(X_1\#_2 X_2)=c(X')$, where $X'=\left((X_1\#_2 X_2)^{(e)}\right)^{(e')}$.

In both the cases just described one has that $X'=(X_1)^{(e_1)}\#_0(X_2)^{(e_2)}$,
whence $c(X')=c\left((X_1)^{(e_1)}\right)+c\left((X_2)^{(e_2)}\right)$.
But we know that $c(X')=c(X_1\#_2 X_2)$ and $c(X_1)=c\left((X_1)^{(e_1)}\right)$, and
the standing assumption is that $c(X_1\#_2 X_2)=c(X_1)+c(X_2)$, whence
$c\left((X_2)^{(e_2)}\right)=c(X_2)$, as desired.

\medskip

Turning to the opposite implication, we know from~\cite{PePeJKTR} that
the desired equality $c(X_1\#_2 X_2)=c(X_1)+c(X_2)$ holds true under condition (1).
We then assume that (2) is satisfied but (1) is not. Then again
up to permutation we can assume that $X_1$ meets a $1$-sphere $S$ at
a point of $e_1$. With precisely the same notation and arguments
as above we then have $c(X_1\#_2 X_2)=c(X')$, and
$c(X')=c\left(X_1^{(e_1)}\right)+c\left(X_2^{(e_2)}\right)$ because
$X'=X_1^{(e_1)}\#_0 X_2^{(e_2)}$. The standing assumption is now that
$c\left(X_j^{(e_j)}\right)=c(X_j)$ for $j=1,2$, and the desired equality $c(X_1\#_2 X_2)=c(X_1)+c(X_2)$
readily follows.\end{proof}

\begin{rem}
\emph{We know from Theorem~\ref{edge:cancelation:thm} that
$c\left(X^{(e)}\right)=c(X)$ when $e$ meets a $1$-sphere in $X$, but
there are many cases where $c\left(X^{(e)}\right)=c(X)$ and $e$
\emph{does not} meet $1$-spheres in $X$. For instance, if $P$ is a
minimal spine of a closed manifold $M$ not containing non-separating
spheres, for any knot $K$ dual to a region of $P$ one has
$c(M,K)=c(M)$, so $K$ can be canceled without affecting complexity,
but $K$ does not meet $1$-spheres in $(M,K)$. This shows in
particular that in Theorem~\ref{2-add:characterization} cases (1)
and (2) have a large overlapping.}
\end{rem}

\section{The Grothendieck group of knot-pairs\\ does not see the knots}

In this section we will show a further result that can be proved with the aid
of $1$-spheres in graph-pairs, as our Theorem~\ref{edge:cancelation:thm} above.
We will soon be restricting our attention to knot-pairs, namely to
pairs $(M,K)$ with $K\subset M$ a knot, but we first establish
the following perhaps not completely intuitive result, that basically follows from
the observation made in Remark~\ref{subtle:rem}, applied to case (B) of the cancelation of an edge:

\begin{prop}\label{immaterial:knot:prop}
Let $X_1$ and $X_2$ be graph-pairs such that $M(X_1)=M(X_2)$ is
connected and $G(X_j)=G_0\sqcup K_j$ with $K_j$ a knot intersected
by a $1$-sphere $S_j$ in $X_j$. Then $X_1\cong X_2$.
\end{prop}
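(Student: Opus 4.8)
The plan is to show that the two knots $K_1$ and $K_2$ can be independently ``absorbed'' into a standard piece, so that both pairs $X_1$ and $X_2$ become homeomorphic to the \emph{same} model pair, from which $X_1\cong X_2$ follows. The key observation, as the statement suggests, is Remark~\ref{subtle:rem}: when a knot component meets a $1$-sphere $S_j$ at a single point, cutting along $S_j$ opens $K_j$ into an arc with a free end, and such an arc is always ``short and straight''---its embedding carries no knotting or linking information. So the topological type of $X_j$ cannot depend on how $K_j$ is knotted or placed, only on the ambient manifold $M$ and the fixed part $G_0$.

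First I would fix the common manifold $M:=M(X_1)=M(X_2)$ and the common sub-graph $G_0$. For each $j$, I would cut $X_j$ along the $1$-sphere $S_j$ and cap off the two resulting boundary spheres with trivial balls, exactly as in the non-separating case (B) of the proof of Theorem~\ref{edge:cancelation:thm}. This expresses $X_j$ as an $\#_0$-type reconstruction: one recovers $M$ as $Y_j\#(S^2\times S^1)$ where $Y_j$ carries $G_0$, and the knot $K_j$ is rebuilt by running an arc through the $S^2\times S^1$ summand and closing it up through the re-glued spheres. The point is that, by Remark~\ref{subtle:rem}, this arc is unknotted and unlinked from $G_0$ and from itself, so the reconstruction is canonical: up to homeomorphism it does not remember anything about the original placement of $K_j$. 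Consequently $X_j$ is homeomorphic to a \emph{standard} model in which $K_j$ is the ``core knot'' $\{*\}\times S^1$ of an $S^2\times S^1$ summand attached to $(Y_j,G_0)$.

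Next I would identify the two models with each other. Since capping $S_j$ removes the $S^2\times S^1$ summand and leaves $(M',G_0)$, and since $M(X_1)=M(X_2)=M$ and the $G_0$ part is literally the same, the manifolds $Y_1$ and $Y_2$ (together with their copies of $G_0$) coincide up to homeomorphism: both are obtained from the \emph{same} $(M,G_0)$ by the inverse operation of attaching a standard $S^2\times S^1$ carrying the standard core knot. Hence both $X_1$ and $X_2$ are homeomorphic to one and the same pair, namely $(M,G_0\sqcup K_{\mathrm{std}})$ with $K_{\mathrm{std}}$ a standardly-embedded core circle, and therefore $X_1\cong X_2$.

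I expect the main obstacle to be making rigorous the claim that the reconstructed knot is genuinely standard and independent of $j$---that is, turning the intuitive ``short and straight arc'' picture of Remark~\ref{subtle:rem} into an honest isotopy. Concretely, one must verify that after capping $S_j$ the data $(M,G_0)$ recovered is the same for both $j$ (this is immediate, since $M$ and $G_0$ are given to be equal), and then that re-inserting the core knot through a single $1$-sphere yields a pair whose homeomorphism type depends only on $(M,G_0)$ and not on any further choices. The delicate point is that the arc obtained by cutting $K_j$ has \emph{free ends} lying on the capping spheres, so it bounds an embedded disk guiding an isotopy to the standard straight arc; one should check that this isotopy can be performed inside $M$ rel $G_0$ without creating new intersections with $G_0$, which is exactly where the $1$-sphere hypothesis (a single transverse intersection point) is used.
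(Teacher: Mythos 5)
Your proposal is correct and follows essentially the same route as the paper's own proof: cut $X_j$ along $S_j$ and cap with two balls, note that the cut-open knot becomes an arc with free ends which by Remark~\ref{subtle:rem} is ``short and straight,'' identify the two capped pairs carrying $G_0$, and isotope the ball--arc--ball configurations to a common position away from $G_0$ before regluing. The one place where you are more casual than you should be is in calling ``immediate'' the identification of the capped pairs $(Y_1,G_0)\cong(Y_2,G_0)$: the paper asserts the same identification, but it rests on the uniqueness/cancellation theory for spherical splittings of pairs (the results of~\cite{MaRoots,PeOrbDeco} discussed in Section~1), i.e.\ on cancelling the $(S^2\times S^1,\emptyset)$ summand, and not merely on the hypothesis that $(M,G_0)$ is the same for $j=1,2$.
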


\begin{proof}
Cutting $X_j$ along the $1$-sphere $S_j$ and capping we get some $N_j$ containing
$G_0$, two balls $B_{j,1},B_{j,2}$ and an arc $a_j$ from $\partial B_{j,1}$ to $\partial B_{j,2}$
such that $X_j$ is obtained from $(N_j,G_0\cup a_j)$ by removing the interiors
of $B_{j,1},B_{j,2}$ and gluing, matching the ends of $a_j$.
Now $N_1$ and $N_2$ can be identified to the same connected $N$ with $G_0\subset N$, and the
unions $B_{j,1}\cup a_j\cup B_{j,2}$ can be isotoped to each other in $N$ away from $G_0$,
which implies the conclusion.
\end{proof}

Turning to knot-pairs, we first refine our setting to an oriented
context, namely we consider pairs $(M,K)$ with $M$ a closed
\textit{oriented} $3$-manifold and $K\subset M$ an \textit{oriented}
knot, and we denote by $\calX_{\textrm{knot}}^{\textrm{(or)}}$ the
set of all such pairs viewed up to homeomorphisms of pairs that
preserve orientations. We next note that $\#_2$ can now be
introduced as a well-defined, binary and commutative operation on
$\calX_{\textrm{knot}}^{\textrm{(or)}}$: to define
$(M_1,K_1)\#_2(M_2,K_2)$ one must remove trivial $2$-balls $B_j$
from $(M_j,K_j)$ and glue $\partial B_1$ to $\partial B_2$ insisting
that the gluing homeomorphism should reverse the orientations
induced on $\partial B_j$ by $B_j$ and on the points $(\partial
B_j)\cap K_j$ by $B_j\cap K_j$. Since $O:=(S^3,U)$, with $U$ the
unknot, is an identity element for $\#_2$, we see that
$\calX_{\textrm{knot}}^{\textrm{(or)}}$ has a natural structure of
Abelian semigroup. We next recall that following general
fact~\cite{Groth:ref}:

\begin{prop}\label{universal:prop}
Let $S$ be an Abelian semigroup with operation $\oplus$ and identity
element $o$. Define $K(S)$ as the quotient of $S\times S$ under the
equivalence relation $\sim$, where $(a,b)\sim (c,d)$ if there exists
$u\in S$ such that $a\oplus d\oplus u=b\oplus c\oplus u$. Then the
operation $[a,b]\oplus[c,d]=[a\oplus c,b\oplus d]$ is well-defined
on $K(S)$ and turns $K(S)$ into an Abelian group with identity
element $[o,o]$. Moreover $\varphi:S\to K(S)$ given by
$\varphi(a)=[a,0]$ is a homomorphism of Abelian semigroups
satisfying the following universal property: if $G$ is an Abelian
group and $\psi:S\to G$ is a homomorphism of Abelian semigroups then
there exists a unique homomorphism of groups $\gamma:K(S)\to G$ such
that $\psi=\gamma\compo\varphi$.
\end{prop}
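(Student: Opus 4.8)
The plan is to carry out the standard Grothendieck-group construction, verifying each clause of the statement in turn, with particular attention to the single place where the auxiliary element $u$ in the definition of $\sim$ is genuinely needed. First I would check that $\sim$ is an equivalence relation on $S\times S$. Reflexivity and symmetry are immediate from commutativity of $\oplus$ and the symmetric shape of the defining equation. For transitivity, assuming $(a,b)\sim(c,d)$ via some $u$ and $(c,d)\sim(e,f)$ via some $v$, I would add the witnessing equations $a\oplus d\oplus u=b\oplus c\oplus u$ and $c\oplus f\oplus v=d\oplus e\oplus v$ and regroup by commutativity into $a\oplus f\oplus w=b\oplus e\oplus w$ with $w=c\oplus d\oplus u\oplus v$. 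This is exactly the step that forces the presence of $u$ in the definition: since $S$ need not be cancellative one cannot simply cancel $c,d,u,v$, and the extra summand $w$ is what makes transitivity go through. I regard this as the only genuinely delicate point of the whole argument.

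Next I would show that $\oplus$ descends to $K(S)$: given $(a,b)\sim(a',b')$ via $u$ and $(c,d)\sim(c',d')$ via $v$, adding and regrouping yields $(a\oplus c)\oplus(b'\oplus d')\oplus(u\oplus v)=(b\oplus d)\oplus(a'\oplus c')\oplus(u\oplus v)$, so the witness $u\oplus v$ gives $[a\oplus c,b\oplus d]=[a'\oplus c',b'\oplus d']$; this is formally the same maneuver as transitivity. Associativity and commutativity of the induced operation are inherited componentwise from $S$, and $[o,o]$ is a two-sided identity since $x\oplus o=x$. For inverses I would check that $[b,a]$ inverts $[a,b]$: indeed $[a,b]\oplus[b,a]=[a\oplus b,a\oplus b]$, and any class $[x,x]$ equals $[o,o]$ because the required $x\oplus o\oplus u=x\oplus o\oplus u$ holds trivially for any $u$. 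This establishes the abelian group structure, and that $\varphi(a)=[a,o]$ is a semigroup homomorphism then follows from $[a,o]\oplus[b,o]=[a\oplus b,o\oplus o]=[a\oplus b,o]$.

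The last and most substantive clause is the universal property. Given a homomorphism of abelian semigroups $\psi\colon S\to G$ into an abelian group $G$, I would define $\gamma\colon K(S)\to G$ by $\gamma([a,b])=\psi(a)-\psi(b)$, using that $G$ has inverses. The key verification is well-definedness: if $(a,b)\sim(c,d)$ via $u$, applying $\psi$ to $a\oplus d\oplus u=b\oplus c\oplus u$ gives $\psi(a)+\psi(d)+\psi(u)=\psi(b)+\psi(c)+\psi(u)$ in $G$, and now one cancels $\psi(u)$---legitimately, since $G$ is a group---to obtain $\psi(a)-\psi(b)=\psi(c)-\psi(d)$. Here the contrast between $S$ and $G$ is the whole point: cancellation fails in $S$ but is available in $G$. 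I would also record that $\psi(o)=0$ automatically, since $\psi(o)=\psi(o\oplus o)=\psi(o)+\psi(o)$ forces $\psi(o)=0$ in $G$. That $\gamma$ is a group homomorphism is a one-line computation, and $\gamma(\varphi(a))=\psi(a)-\psi(o)=\psi(a)$ yields $\psi=\gamma\compo\varphi$. For uniqueness I would use that every element of $K(S)$ can be written as $\varphi(a)\oplus(\varphi(b))^{-1}=[a,o]\oplus[o,b]=[a,b]$, so any homomorphism $\gamma'$ with $\gamma'\compo\varphi=\psi$ is forced to send $[a,b]$ to $\psi(a)-\psi(b)$, whence $\gamma'=\gamma$. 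The whole proof is routine; the only conceptual obstacle worth flagging is the systematic use of the witness $u$, without which neither transitivity nor well-definedness would survive the failure of cancellation in $S$.
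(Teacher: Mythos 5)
Your proof is correct and complete: the equivalence-relation check, the well-definedness of the operation and of $\gamma([a,b])=\psi(a)-\psi(b)$, and the uniqueness argument via $[a,b]=[a,o]\oplus[o,b]$ are all the standard Grothendieck-group argument, with the role of the witness $u$ (compensating for the possible failure of cancellation in $S$) correctly identified as the only delicate point. Note that the paper itself offers no proof of this proposition---it is recalled as a general fact with a citation to Atiyah's K-theory notes---so your write-up simply supplies the standard argument that the cited reference contains.
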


We can now establish the following:

\begin{thm}\label{Groth:blind:thm}
If $\varphi:\calX_{\emph{knot}}^{\emph{(or)}}\to
K(\calX_{\emph{knot}}^{\emph{(or)}})$ is the universal
homomorphism them $\varphi(M,K)$ depends on $M$ only.
\end{thm}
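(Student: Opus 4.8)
The plan is to show that $\varphi(M,K)$ is independent of $K$ by proving that in the Grothendieck group $K(\calX_{\textrm{knot}}^{\textrm{(or)}})$ the class $[(M,K),O]$ does not change when $K$ is replaced by any other oriented knot in $M$. Since $\varphi(M,K)=[(M,K),O]$, it suffices to produce, for any two oriented knots $K$ and $K'$ in the same $M$, an element $u\in\calX_{\textrm{knot}}^{\textrm{(or)}}$ together with an isomorphism of pairs witnessing $(M,K)\#_2 u\cong(M,K')\#_2 u$; by the definition of $\sim$ this forces $\varphi(M,K)=\varphi(M,K')$. The natural candidate for $u$ is the pair $D:=(S^2\times S^1,\{*\}\times S^1)$ introduced in Remark~\ref{weakly:cited:rem}, which carries a $1$-sphere meeting its knot component at a single point.

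First I would make precise how $\#_2$ interacts with $1$-spheres. Forming $(M,K)\#_2 D$ amounts to tying the knot $K$ together with the fibre $\{*\}\times S^1$ through a connecting $2$-ball; the resulting pair has a single knot component $K\#(\{*\}\times S^1)$ sitting inside $M\#(S^2\times S^1)$. The point is that the sphere $S^2\times\{*\}$ survives as a $1$-sphere meeting this combined knot transversely at exactly one point. Thus $(M,K)\#_2 D$ is a graph-pair whose knot component is intersected by a $1$-sphere, and the same is true of $(M,K')\#_2 D$. This is precisely the hypothesis of Proposition~\ref{immaterial:knot:prop}, with $G_0=\emptyset$ and ambient manifold $M\#(S^2\times S^1)$, which is connected.

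The key step is then to invoke Proposition~\ref{immaterial:knot:prop} to conclude that the two pairs $(M,K)\#_2 D$ and $(M,K')\#_2 D$ are homeomorphic as pairs, since both have the form (connected manifold, knot met by a $1$-sphere) with the same underlying manifold. Indeed the proposition says that such a pair is determined by the ambient manifold alone, regardless of which knot is carried, because after cutting along the $1$-sphere and capping, the knot collapses to an arc with a free end that can be isotoped freely away from any other part of the graph --- this is the ``short and straight'' phenomenon of Remark~\ref{subtle:rem}. Hence $(M,K)\#_2 D\cong(M,K')\#_2 D$ in $\calX_{\textrm{knot}}^{\textrm{(or)}}$, so taking $u=D$ in the relation $\sim$ gives $\bigl((M,K),O\bigr)\sim\bigl((M,K'),O\bigr)$ and therefore $\varphi(M,K)=\varphi(M,K')$.

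The main obstacle I anticipate is the oriented bookkeeping: Proposition~\ref{immaterial:knot:prop} is stated for unoriented pairs, whereas $\calX_{\textrm{knot}}^{\textrm{(or)}}$ and the operation $\#_2$ are defined with orientations of both $M$ and $K$, and the gluing in $\#_2$ is pinned down by orientation-reversal conditions. I would need to check that the homeomorphism furnished by Proposition~\ref{immaterial:knot:prop} can be taken to respect the chosen orientations, or else that reversing the orientation of the knot does not alter its class under $\varphi$ --- the latter is plausible since the free arc obtained after cutting along the $1$-sphere can be reoriented by an ambient isotopy, but this point deserves explicit justification. Once the orientations are reconciled, the argument is purely formal and the conclusion that $\varphi(M,K)$ depends only on $M$ follows immediately.
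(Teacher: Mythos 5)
Your proposal is correct and follows essentially the same route as the paper's own proof: the same choice of $Y=D=(S^2\times S^1,\{*\}\times S^1)$, the same reduction via the Grothendieck relation $\sim$ (using that $O$ is the identity for $\#_2$), and the same appeal to Proposition~\ref{immaterial:knot:prop} with $G_0=\emptyset$ to get $(M,K_1)\#_2 D\cong(M,K_2)\#_2 D$. Your explicit flagging of the orientation bookkeeping is a point the paper itself only acknowledges in passing (``dealing with orientation matters''), so no gap there either.
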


\begin{proof}
Let $K_1$ and $K_2$ be any two knots in $M$. Recall that
$\varphi(M,K_j)=[(M,K_j),O]$ with $O=(S^3,U)$ and $U$ the unknot.
According to the definition of
$K(\calX_{\textrm{knot}}^{\textrm{(or)}})$ and the fact that $O$ is
the identity element of $\#_2$ to show that
$[(M,K_1),O]=[(M,K_2),O]$ we only need to exhibit an element $Y$ of
$\calX_{\textrm{knot}}^{\textrm{(or)}}$ such that $(M,K_1)\#_2 Y$ is
homeomorphic $(M,K_2)\#_2Y$. For such a $Y$ we use the pair
$D=(S^2\times S^1,\{*\}\times S^1)$ already defined above, and we
remark that $(M,K_j)\#_2 D=(M\#_0(S^2\times S^1),K_j')$ where
$K_j'\subset M\#_0(S^2\times S^1)$ is a knot intersecting a
$1$-sphere. Using Proposition~\ref{immaterial:knot:prop} (and
dealing with orientation matters) we conclude that indeed
$(M,K_1)\#_2 D$ and $(M,K_2)\#_2D$ are the same in
$\calX_{\textrm{knot}}^{\textrm{(or)}}$, and the conclusion follows.
\end{proof}

\begin{cor}\label{Monly:cor}
If $\psi$ is an invariant of knot-pairs with values in an Abelian
group and $\psi$ is additive under $\#_2$ then $\psi(M,K)$ depends on $M$ only.
\end{cor}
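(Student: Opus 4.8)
The plan is to read the hypothesis as the statement that $\psi$ is a homomorphism of Abelian semigroups, and then feed it into the Grothendieck-group machinery of Proposition~\ref{universal:prop}, after which Theorem~\ref{Groth:blind:thm} does the rest. Concretely, I would take $\psi$ to be defined on $\calX_{\textrm{knot}}^{\textrm{(or)}}$; if one starts instead from an invariant of unoriented knot-pairs, one simply precomposes with the orientation-forgetting map, which changes nothing in what follows.

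First I would unwind the phrase ``additive under $\#_2$'': it means exactly $\psi(X\#_2 Y)=\psi(X)+\psi(Y)$ for all knot-pairs $X,Y$, that is, that $\psi\colon\calX_{\textrm{knot}}^{\textrm{(or)}}\to G$ is a homomorphism of Abelian semigroups into the target group $G$. Choosing $Y=O=(S^3,U)$ and recalling that $O$ is the identity for $\#_2$, one obtains $\psi(X)=\psi(X)+\psi(O)$ and hence $\psi(O)=0$, so $\psi$ automatically respects identity elements as well.

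Next I would invoke the universal property stated in Proposition~\ref{universal:prop} with $S=\calX_{\textrm{knot}}^{\textrm{(or)}}$: there is a unique homomorphism of groups $\gamma\colon K(\calX_{\textrm{knot}}^{\textrm{(or)}})\to G$ with $\psi=\gamma\compo\varphi$, where $\varphi$ is the universal homomorphism. Finally, Theorem~\ref{Groth:blind:thm} asserts that $\varphi(M,K)$ depends on $M$ only, and therefore $\psi(M,K)=\gamma\bigl(\varphi(M,K)\bigr)$ depends on $M$ only, which is precisely the claimed conclusion.

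I do not expect any genuine obstacle in this last step: all the topological substance lives upstream, in Theorem~\ref{Groth:blind:thm}, hence in Proposition~\ref{immaterial:knot:prop}, and ultimately in the edge-cancelation result Theorem~\ref{edge:cancelation:thm}. The present statement is a purely formal corollary of that input combined with the abstract universal property, and the only point deserving a moment's care is the (routine) identification of an additive invariant with a semigroup homomorphism that factors through $\varphi$.
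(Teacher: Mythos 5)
Your argument is correct, and for oriented pairs it is precisely the paper's: the paper, too, disposes of the oriented case by remarking that the corollary ``readily follows from the previous theorem and the universality property of $\varphi$,'' exactly as you spell out (including the automatic vanishing $\psi(O)=0$, which the paper leaves implicit). Where you genuinely diverge is in the unoriented case. The Grothendieck group $K(\calX_{\textrm{knot}}^{\textrm{(or)}})$ is constructed only for the \emph{oriented} semigroup, because without orientations $\#_2$ is a multivalued operation; the paper therefore handles the unoriented statement either by rerunning the proof of Theorem~\ref{Groth:blind:thm} directly with $\psi$ in place of $\varphi$ (use $(M,K_1)\#_2 D\cong(M,K_2)\#_2 D$, apply additivity, and cancel $\psi(D)$ in the target group), or by adapting Proposition~\ref{universal:prop} to semihypergroups. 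Your alternative --- pulling $\psi$ back along the orientation-forgetting map and quoting the oriented case --- is a third route, and arguably the cleanest, but the phrase ``changes nothing in what follows'' hides the two checks that make it work: (i) additivity of the unoriented $\psi$ must be read as holding for \emph{every} realization of the multivalued unoriented $\#_2$, so that it restricts to additivity for the single-valued oriented $\#_2$ (any oriented sum, after forgetting orientations, is one such realization); and (ii) the oriented conclusion must be transferred back, i.e., given unoriented $K_1,K_2\subset M$ one lifts both pairs to the same oriented manifold and concludes $\psi(M,K_1)=\psi(M,K_2)$, using that every orientable pair admits an orientation. Both checks are routine, so this is a presentational rather than mathematical gap; what your route buys is that neither the semihypergroup formalism nor a second pass through the topological argument is needed, at the price of making the unoriented statement a formal consequence of the oriented one rather than a direct application of Proposition~\ref{immaterial:knot:prop}.
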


In the context of oriented knot-pairs this corollary readily follows
from the previous theorem and the universality property of
$\varphi$. But the corollary is actually true also in an unoriented
context, by the same argument used to prove
Theorem~\ref{Groth:blind:thm}: for $K_1,K_2\subset M$ and suitably
performed $\#_2$'s one has $(M,K_1)\#_2D\cong(M,K_2)\#_2 D$ and,
using additivity of $\psi$ and simplifying $\psi(D)$, one concludes
that $\psi(M,K_1)=\psi(M,K_2)$. As a matter of fact one could also
prove the corollary by adjusting Proposition~\ref{universal:prop} to
the case of multivalued binary operations, using the notion of
\emph{semihypergroup}, obtained from that of \emph{hypergroup} (due
to Wall~\cite{Wall}) by dropping the invertibility
postulate. In this context $a\oplus b$ is a
subset of $S$. To construct $K(S)$ one first defines an element $x$ of $S$
to be a \emph{scalar} if for all $a\in S$ the set $a\oplus x$
consists of a single element; one then sets $K(S)=(S\times S)/_\sim$ where
$(a,b)\sim (c,d)$ if there exists a scalar $u\in S$ such that $a\oplus
d\oplus u=b\oplus c\oplus u$, and one shows that $K(S)$
is universal with respect to maps $\psi:S\to G$ such that $\psi$ is
constant on $a\oplus b$ for all $a,b\in S$, and $\psi$ preserves the
operations and the identity elements.
Since the pair $D$ is a scalar for the semihypergroup of unoriented knot-pairs,
the general machinery gives the unoriented version of Corollary~\ref{Monly:cor}.

\begin{rem}
\emph{We have already mentioned in Remark~\ref{weakly:cited:rem}
the result of~\cite{PePeJKTR}
that $c(X)=0$ when $X=(S^3,K)\#_2 D$, with $K\subset S^3$ any knot and
$D=(S^2\times S^1,\{*\}\times S^1)$. This was actually derived from
the homeomorphism $X\cong D$, of which the homeomorphism
$(M,K_1)\#_2 D\cong (M,K_2)\#_2 D$ used in the proof
of Theorem~\ref{Groth:blind:thm} is an extension
(take $M=S^3$, $K_1=K$ and $K_2=U$, the unknot).}
\end{rem}

\vspace{1cm}

\noindent
Dipartimento di Matematica Applicata\\
Universit\`a di Pisa\\
Via Filippo Buonarroti, 1C\\
56127 PISA -- Italy\\
\ \\
pervova@guest.dma.unipi.it\\
petronio@dm.unipi.it

\vspace{1cm}

\noindent
Dipartimento di Matematica\\
Universit\`a di Roma ``Tor Vergata''\\
Via della Ricerca Scientifica, 1\\
00133 ROMA -- Italy\\
\ \\
sasso@mat.uniroma2.it

\noindent

\end{document}